\begin{document}
	
\begin{frontmatter}

\title{Multiplicative Perturbation Bounds for Multivariate Multiple Linear Regression in Schatten $p$-Norms}

\author{Jocelyn T. Chi\fnref{corres}}
\address{Department of Statistics\\ North Carolina State University\\ Raleigh, NC 27695, USA\\
	jtchi@ncsu.edu}
\fntext[corres]{Corresponding author}

\author{Ilse C. F. Ipsen}
\address{Department of Mathematics\\ North Carolina State University \\Raleigh, NC 27695, USA\\
	ipsen@ncsu.edu}

%
%

\begin{abstract}
Multivariate multiple linear regression (MMLR), which occurs in a number of practical applications, generalizes traditional least squares (multivariate linear regression) to multiple right-hand sides. We extend recent MLR analyses to sketched MMLR in general Schatten $p$-norms by interpreting the sketched problem as a multiplicative perturbation. Our work represents an extension of Maher's results on Schatten $p$-norms.  We derive expressions for the exact and perturbed solutions in terms of projectors for easy geometric interpretation.  We also present a geometric interpretation of the action of the sketching matrix in terms of relevant subspaces.  We show that a key term in assessing the accuracy of the sketched MMLR solution can be viewed as a tangent of a largest principal angle between subspaces under some assumptions.  Our results enable additional interpretation of the difference between an orthogonal and oblique projector with the same range.
\end{abstract}

\begin{keyword}
projector, multiplicative perturbations, Moore Penrose inverse, Schatten $p$-norms, multivariate multiple linear regression
\MSC[2020] 15-02
\end{keyword}

\end{frontmatter}


\section{Introduction}

Multivariate multiple linear regression (MMLR)\footnote{We abbreviate multivariate multiple linear regression as ``MMLR" throughout this paper.} is a natural generalization of traditional least squares regression (multivariate linear regression) to multiple right-hand sides.  It is also useful in many large-scale real-world applications including image classification \cite{luo2014schatten, yang2016nuclear}, quality control monitoring \cite{eyvazian2011phase, noorossana2010statistical}, genetic association studies \cite{breiman1997predicting, li2015multivariate}, spatial genetic variation studies \cite{wang2013examining}, climate studies \cite{jeong2012multisite}, and low-rank tensor factorizations \cite{larsen2020practical} to name a few.  In the mathematics literature, least squares problems with multiple right-hand sides occur in the total least squares context, where both the independent and dependent variables may contain errors \cite{tls, core_lsmultRHS, optimal_lsmultRHS}.

In recent years, randomized approaches have become a popular method of dealing with very large data problems in numerical linear algebra \cite{mahoney2011randomized, woodruff2014sketching}.  The idea is to utilize random projections, random sampling, or some combination of the two to reduce the problem to a lower dimension while approximately retaining the characteristics of the original problem.  Referred to as \emph{sketching}, this has become a popular approach for the fast solution of highly overdetermined or underdetermined regression problems \cite{avron2010blendenpik, chi2018randomized, Drineas2011, MMY14, MMY15, LSRN, RM16, Sarlos2006}, where either the number of rows far exceeds the number of columns, or vice versa.

We view row-sketched MMLR as a multiplicative perturbation of MMLR, and derive perturbation bounds that are amenable to geometric interpretation.  Following up on our recent work \cite{chi2018randomized}, which quantifies the effect of sketching on the  geometry of traditional least squares, we extend our analysis to sketched MMLR in general Schatten $p$-norms. Our results represent an extension of Maher's work \cite{maher1990op, maher1992some, maher2007some, maher2007b} on Schatten $p$-norms.  
Schatten $p$-norms appear in numerous machine learning problems.  In particular, the nuclear ($p=1$) and Frobenius ($p=2$) norms appear in penalized regression \cite{wang2017sketched, yang2016nuclear}, regularized matrix regression \cite{zhou2014regularized}, matrix completion \cite{candes2010matrix, candes2009exact}, trace approximation \cite{han2017approximating, ubaru2017fast}, image feature extraction \cite{du2017two}, and image processing and classification \cite{lefkimmiatis2013hessian, wang2016schatten, wang2017optimal}.

\subsection{Problem setting}
We begin with the exact MMLR problem in a Schatten $p$-norm.  Denote the singular values of a matrix $\mm \in \real^{m \times d}$ by
\begin{eqnarray*}
	\sigma_{1}(\mm) \ge \sigma_{2}(\mm) \ge \cdots \ge \sigma_{\min(m,d)}(\mm) \ge 0.
\end{eqnarray*}
The Schatten $p$-norm \cite[page 199]{johnson1985matrix} of $\mm$ is a function of its singular values
\begin{eqnarray*}
\|\mm\|_{(p)} = \sqrt[\leftroot{-2}\uproot{2}p]{\sigma_{1}(\mm)^{p} + \cdots + \sigma_{r}(\mm)^{p}} \quad \text{ for } \quad 1 \le p \le \infty.
\end{eqnarray*}

Given a pair of matrices $\ma\in\real^{m\times n}$ and $\mb\in\real^{m\times d}$ with $\rank(\ma)=n$, the goal is to estimate the solution $\mxhat \in \real^{n \times d}$ satisfying
\begin{eqnarray}\label{eqn:exactMMLR}
\min_{\mx\in\real^{n \times d}}{\|\ma\mx-\mb\|_{(p)}} \quad \text{for } 1 \le p \le \infty.
\end{eqnarray}
Popular Schatten $p$-norms include the
\begin{itemize}
    \item $p=1$ nuclear (trace) norm $\lVert \mm \rVert_{*} = \sum_{j=1}^{\min(m,d)} \sigma_{j}(\mm) = \lVert \mm \rVert_{(1)}$,
    \item $p=2$ Frobenius norm $\lVert \mm \rVert_{\text{F}} = \sqrt{\sum_{j=1}^{\min(m,d)} \sigma_{j}(\mm)^{2}} = \lVert \mm \rVert_{(2)}$, $\quad$ and 
    \item $p = \infty$ Euclidean (operator) norm $\lVert \mm \rVert_{2} = \sigma_{1}(\mm) = \lVert \mm \rVert_{(\infty)}$.
\end{itemize}

Given a matrix $\ms \in \real^{c \times m}$ with $n \le c \le m$, the perturbed MMLR problem in a Schatten $p$-norm via randomized row-sketching is
\begin{eqnarray}\label{eqn:perturbedMMLR}
	\min_{\mx\in\real^{n \times d}}{\|\ms(\ma\mx-\mb)\|_{(p)}} \quad \text{for } 1 \le p \le \infty.
\end{eqnarray}
Row-sketching can be an effective approach to handling large data in the highly over-constrained case
\cite{DrineasMM2006, Drineas2011, MMY15, RM16, WZM18}, where $m \gg n$.  

\subsection{Existing work}

Widely considered to have originated in \cite{Sarlos2006}, randomized sketching has become a popular approach to solving large data problems in machine learning and numerical linear algebra \cite{mahoney2011randomized, woodruff2014sketching}.  In the regression setting, sketching approaches can be broadly classified \cite[Section 1]{THM2017} according to whether they achieve row compression \cite{BD09,DrineasMM2006,Drineas2011,ipsen2014effect,MMY14,MMY15,RT08,WZM18}, column compression \cite{avron2010blendenpik, THM2017}, or both \cite{LSRN}.

Recent work has improved the theoretical understanding of randomized regression from a statistical \cite{chi2018randomized, MMY14, MMY15, RM16, wang2017sketched} and geometric perspective \cite{chi2018randomized}.  Here, we extend the analysis in \cite{chi2018randomized} to the sketched MMLR problem in a Schatten $p$-norm.

The sketched MMLR problem in \eqref{eqn:perturbedMMLR} can be viewed as a generalization of weighted least squares since $\ms$ is not required to be positive definite diagonal \cite{MR2558818, MR976336, MR1745316}.  Additionally, \eqref{eqn:perturbedMMLR} holds more generally for Schatten $p$-norms with $1 \le p \le \infty$ rather than only the Frobenius norm.  Perturbation analysis for weighted least squares quantify the effect of additive perturbations of the weights, $\ma$, or both \cite{MR1745316}.  By constrast, we view the sketched problem in \eqref{eqn:perturbedMMLR} as a multiplicative perturbation of \eqref{eqn:exactMMLR}.

\subsection{Our contributions}

We show that the accuracy of the sketched MMLR solution in a Schatten $p$-norm depends on a term that captures both 1) how close the sketching matrix $\ms$ is to approximately preserving orthogonality \cite{chmielinski2005linear, mojvskerc2010mappings, turnvsek2007mappings} for any rank-preserving $\ms$ and 2) how close the columns of the sketched subspace are to being orthonormal (Proposition \ref{thm:error-rankpreserved}).  Our result is an extension of \cite[Lemma 1]{Drineas2011} as it holds under weaker assumptions and extends the result in \cite[Lemma 1]{Drineas2011} to the $d\ge 1$ case and for Schatten $p$-norms with $1 \le p \le \infty$.

We also present a geometric interpretation of the action of the sketching matrix $\ms$ in terms of relevant subspaces.  We show that a key term in assessing the accuracy of the sketched MMLR solution can be interpreted as the tangent of a largest principal angle between these subspaces if $\ms$ has orthonormal rows (Proposition \ref{prop:sqdagsqperp1}) or if $\ms$ preserves rank (Proposition \ref{prop:sqdagsqperp2}).  We then present a geometric interpretation of the operator norm difference between an orthogonal and oblique projector with the same range when $\ms$ preserves rank (Proposition \ref{prop:dist_Pa-P}).

\subsection{Preliminaries}
\label{sec:preliminaries}

We begin by setting some notation.  Let $\mi_{n} = \begin{pmatrix}\ve_{1} & \ve_{2} & \dots & \ve_{n} \end{pmatrix}$ denote the $n \times n$ identity matrix, and let the superscript $T$ denote the transpose.  
Let $\ma \in \real^{m \times n}$ be a matrix with $\rank(\ma) = n$.  
Then $\ma$ has the following full and thin QR decompositions
\begin{eqnarray}
\label{eqn:decompA}
\ma &= \begin{pmatrix}\mq & \mq_{\perp}\end{pmatrix} \begin{pmatrix} \mr \\ {\bf 0}_{(m-n) \times n} \end{pmatrix} = \mq \mr,
\end{eqnarray}
respectively, where $\mr \in \mathbb{R}^{n \times n}$ is nonsingular.  Thus, $\mq \in \real^{m \times n}$ and $\mq_{\perp} \in \real^{m \times (m-n)}$ represent orthonormal bases for $\range(\ma)$ and $\range(\ma)^{\perp} = \mynull(\ma\Tra)$, respectively. 

Since $\ma$ has full column rank, its Moore-Penrose generalized inverse is
\begin{align*}
\ma\Dag &= (\ma\Tra \ma)\Inv \ma\Tra 
= \mr\Inv \mq\Tra.
\end{align*}
The two-norm condition number of $\ma$ with respect to left inversion is 
\begin{eqnarray*}
	\kappa_{2}(\ma) &= \| \ma \|_{2}\, \|\ma\Dag\|_{2}.
\end{eqnarray*}

The following lemma asserts strong multiplicativity for Schatten $p$-norms and invariance under multiplication by matrices with orthonormal columns (rows) on the left (right).

\begin{lemma}[\protect{\cite[(2.7)]{maher2007b}}]
	\label{lem:pnorms}
	For $\ma \in \real^{m \times n}, \mb \in \real^{k \times m}$ and $\mc \in \real^{n \times l}$ with $1 \le p \le \infty$, we have
	\begin{eqnarray*}
		\| \mb \ma \mc \|_{(p)} \le \|\mb \|_{2} \|\ma \|_{2} \|\mc\|_{(p)}.
	\end{eqnarray*}
\end{lemma}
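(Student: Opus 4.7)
The plan is to establish a pointwise bound on the singular values of $\mb \ma \mc$ and then lift it to the Schatten $p$-norm by summing $p$-th powers. The workhorse inequality is
\[
\sigma_j(\mx \my) \le \|\mx\|_2 \, \sigma_j(\my)
\]
for any conformable $\mx, \my$ and every index $j$, which I would derive from the Loewner order: the estimate $\mx\Tra \mx \preceq \|\mx\|_2^2 \, \mi$ in the positive semidefinite sense yields $\my\Tra (\mx\Tra \mx) \my \preceq \|\mx\|_2^2 \, \my\Tra \my$, after which Weyl's monotonicity principle for eigenvalues of Hermitian matrices gives $\sigma_j(\mx \my)^2 \le \|\mx\|_2^2 \, \sigma_j(\my)^2$.

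I would then apply this bound with $\mx = \mb \ma$ and $\my = \mc$, and invoke submultiplicativity $\|\mb \ma\|_2 \le \|\mb\|_2 \|\ma\|_2$ of the operator norm, to obtain
\[
\sigma_j(\mb \ma \mc) \le \|\mb\|_2 \|\ma\|_2 \, \sigma_j(\mc), \qquad j = 1, 2, \ldots .
\]
For $1 \le p < \infty$ the Schatten-norm inequality then follows by raising both sides to the $p$-th power, summing over $j$, and extracting a $p$-th root at the end. The case $p = \infty$ is immediate, since $\|\cdot\|_{(\infty)} = \sigma_1(\cdot)$.

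The only genuinely non-routine piece is the singular value bound itself, and Weyl's monotonicity principle (or equivalently a Courant--Fischer variational argument) dispatches it in a few lines. I do not anticipate any further obstacles; the remainder of the argument is bookkeeping with $p$-th powers, and the operator-norm submultiplicativity used in the middle step is classical.
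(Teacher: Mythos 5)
Your proof is correct: the singular-value dominance $\sigma_j(\mb\ma\mc)\le\|\mb\ma\|_2\,\sigma_j(\mc)$ via the Loewner order and Weyl monotonicity, combined with submultiplicativity of the operator norm and summation of $p$-th powers (noting that singular values of $\mb\ma\mc$ beyond $\rank(\mc)$ vanish, so the sums align), gives exactly the stated bound. The paper itself offers no self-contained argument here --- it only remarks that the lemma follows from a modification of the proof of (2.5) in Maher's work, which rests on the same singular-value inequality for compact operators --- so your write-up is essentially the standard underlying argument made explicit in the finite-dimensional setting, and there is no gap.
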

This version of Lemma \ref{lem:pnorms} is obtained from a modification of the proof for \cite[(2.5)]{maher2007b}.  Although we assume that $\rank(\ma) = n$ throughout this paper except in Proposition \ref{thm:orig}, Lemma \ref{lem:pnorms} holds regardless of whether or not $\ma$ has full column rank.

\section{Multivariate Multiple Linear Regression}\label{sec:problemsetup}

We describe the solution and regression residual for the exact and perturbed MMLR problems in a Schatten $p$-norm in \eqref{eqn:exactMMLR} and \eqref{eqn:perturbedMMLR}, respectively.  The following states that the solutions for \eqref{eqn:exactMMLR} are the same, regardless of the choice of $p \ge 1$ \cite{maher2007b}.

\begin{prop}[\cite{maher1990op, maher1992some, maher2007b}]\label{thm:orig}
	Let matrices $\ma \in \real^{m \times n}$ and $\mb \in \real^{m \times d}$ be given.   
	The MMLR problem in a Schatten $p$-norm
	\begin{eqnarray*}
		\min_{\mx\in\real^{n \times d}}{\|\ma\mx-\mb\|_{(p)}} \quad \text{ for } \quad 1 \le p \le \infty
	\end{eqnarray*}
	has the minimal Schatten $p$-norm solution $\mxhat \equiv \ma\Dag \mb$ with prediction and regression residual
	\begin{eqnarray*}
		\mbhat &\equiv& \ma \mxhat \quad \text{ and } \\
		\mrhat &\equiv& \mb - \ma\mxhat = (\mi - \ma \ma\Dag)\mb,
	\end{eqnarray*}
	respectively.  If $\rank(\ma) = n$, then the solution $\mxhat = \mr\Inv \mq\Tra \mb$ is unique with  
	regression residual $\mrhat = (\mi - \mq \mq\Tra)\mb = \mq_{\perp} \mq_{\perp}\Tra\mb$.
\end{prop}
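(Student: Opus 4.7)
The plan is to convert the $p$-independent minimization to a comparison of ordered singular values via an orthogonal decomposition of the residual. Set $P \equiv \ma\ma\Dag$, the orthogonal projector onto $\range(\ma)$, and write
\begin{eqnarray*}
\ma\mx - \mb \;=\; U - V, \qquad U \equiv \ma\mx - P\mb, \qquad V \equiv (\mi - P)\mb.
\end{eqnarray*}
Since $U$ has columns in $\range(\ma)$ and $V$ has columns in $\range(\ma)^\perp$, we have $U\Tra V = 0$, so the cross terms vanish in
\begin{eqnarray*}
(\ma\mx - \mb)\Tra(\ma\mx - \mb) \;=\; U\Tra U + V\Tra V.
\end{eqnarray*}

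The key step passes from this matrix identity to an inequality for every Schatten $p$-norm at once. Because $U\Tra U \succeq 0$, the min-max (Weyl) monotonicity for eigenvalues of sums of symmetric matrices gives
\begin{eqnarray*}
\sigma_i(\ma\mx - \mb)^2 \;=\; \lambda_i(U\Tra U + V\Tra V) \;\ge\; \lambda_i(V\Tra V) \;=\; \sigma_i(V)^2
\end{eqnarray*}
for each $i$. Since each Schatten $p$-norm is a nondecreasing function of the ordered singular values, this yields $\|\ma\mx - \mb\|_{(p)} \ge \|V\|_{(p)} = \|(\mi - \ma\ma\Dag)\mb\|_{(p)}$ simultaneously for every $1 \le p \le \infty$, with equality attainable precisely when $U = 0$, i.e., when $\ma\mx = \ma\ma\Dag\mb$. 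This is realized by $\mxhat = \ma\Dag\mb$, delivering the claimed $\mbhat$ and $\mrhat$.

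To argue that $\mxhat = \ma\Dag\mb$ is the \emph{minimal} Schatten $p$-norm solution for every $p$ (relevant for rank-deficient $\ma$), write any other minimizer as $\mx = \mxhat + Y$ with $\ma Y = 0$. Standard Moore-Penrose properties place the columns of $\mxhat$ in $\range(\ma\Tra)$ and those of $Y$ in $\mynull(\ma) = \range(\ma\Tra)^\perp$, so $\mxhat\Tra Y = 0$ and the identical Weyl argument applied to $\mx\Tra\mx = \mxhat\Tra\mxhat + Y\Tra Y$ gives $\|\mx\|_{(p)} \ge \|\mxhat\|_{(p)}$. When $\rank(\ma) = n$, $\mynull(\ma) = \{0\}$ forces $Y = 0$, so the minimizer is unique; substituting the thin QR factorization \eqref{eqn:decompA} into $\ma\Dag = (\ma\Tra\ma)\Inv\ma\Tra$ yields $\ma\Dag = \mr\Inv\mq\Tra$ and $\mi - \ma\ma\Dag = \mq_\perp \mq_\perp\Tra$, giving the closed-form expressions for $\mxhat$ and $\mrhat$.

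The main obstacle is the $p$-independence of the minimizer, since the usual Pythagorean identity $\|U - V\|^2 = \|U\|^2 + \|V\|^2$ for column-orthogonal summands is specific to the Frobenius norm. Weyl's eigenvalue monotonicity resolves this by upgrading the quadratic matrix identity for $(\ma\mx - \mb)\Tra(\ma\mx - \mb)$ to the entrywise singular-value inequality $\sigma_i(\ma\mx - \mb) \ge \sigma_i(V)$, which then transfers to every Schatten $p$-norm at once.
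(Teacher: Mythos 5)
Your proposal takes a genuinely different, self-contained route: the paper's own proof delegates everything substantive to Maher's results (the inequality $\|\ma\mx-\mb\|_{(p)}\ge\|\ma\ma\Dag\mb-\mb\|_{(p)}$ and the singular-value comparison $\sigma_{j}(\ma\mx-\mb)\ge\sigma_{j}(\ma\ma\Dag\mb-\mb)$ are cited, not derived), whereas you reprove that singular-value inequality directly from the orthogonal splitting $\ma\mx-\mb=U-V$ together with Weyl monotonicity applied to $(\ma\mx-\mb)\Tra(\ma\mx-\mb)=U\Tra U+V\Tra V$, and you handle the minimal-norm assertion by the same device applied to $\mx\Tra\mx=\mxhat\Tra\mxhat+Y\Tra Y$. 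This is correct for the core claim and arguably more informative than the citation: it makes the $p$-independence of the minimizer transparent and replaces the operator-theoretic machinery of the references with an elementary eigenvalue argument.

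One step does not survive at $p=\infty$: the claim that equality $\|\ma\mx-\mb\|_{(p)}=\|V\|_{(p)}$ holds ``precisely when $U=0$,'' and the consequent assumption that every minimizer can be written as $\mxhat+Y$ with $\ma Y=0$. For $1\le p<\infty$ this is fine, since equality of the norms forces equality of all singular values, hence equality of traces, hence $U=0$; but the operator norm constrains only the largest singular value. Concretely, with $m=2$, $n=1$, $d=2$, $\ma=\ve_{1}$ and $\mb=\begin{pmatrix}0&0\\1&0\end{pmatrix}$, every $\mx=\begin{pmatrix}0&t\end{pmatrix}$ with $|t|\le 1$ attains the minimal value $\|\ma\mx-\mb\|_{(\infty)}=1$, yet $U=\ma\mx\ne{\bf 0}$ for $t\ne 0$; here $\rank(\ma)=n$ and the minimizer is nevertheless not unique. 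So your argument establishes that $\mxhat$ is a minimizer for all $1\le p\le\infty$, and that it is the minimal-norm (and, when $\rank(\ma)=n$, unique) minimizer for $1\le p<\infty$, but the minimal-norm and uniqueness assertions at $p=\infty$ require a separate argument --- a point the paper itself does not address directly but defers entirely to Maher's Corollary 3.1.
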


\begin{proof}
For a proof that $\mxhat$ is the minimal Schatten $p$-norm solution to \eqref{eqn:exactMMLR}, see \cite{maher1990op, maher1992some, maher2007b}.  Specifically, \cite{maher1990op} shows that $\|\ma\mx - \mb\|_{(p)} \ge \|\ma\ma\Dag\mb - \mb\|_{(p)}$ for $2\le p < \infty$ and \cite{maher1992some} extends the result to $1 \le p < \infty$.  Then, \cite{maher2007b} extends the inequality to $1\le p\le \infty$ by showing that $\sigma_{j}(\ma\mx - \mb) \ge \sigma_{j}(\ma\ma\Dag\mb - \mb )$ for $j = 1, 2, \dots$ for finite rank operators.  Finally, \cite[Corollary 3.1]{maher2007b} shows that $\mxhat$ has minimal Schatten $p$-norm.  If $\rank(\ma) = n$, then $\mynull(\ma) = \{ {\bf 0} \}$ so that the general solution in \cite[Corollary 3.1]{maher2007b} is also unique. 
\end{proof}
	
Let $\ms \in \real^{c \times m}$ be a multiplicative perturbation matrix from the left with $n \le c \le m$ and $\rank(\ms\ma) \le \rank(\ma) = n$.  For example, $\ms$ may be a sampling matrix that extracts rows from $\ma$ \cite{Drineas2011, MMY15}, a projection matrix \cite{Ailon2009, Sarlos2006}, or a combination of sampling and projection matrices \cite{avron2010blendenpik, Drineas2011}.

\begin{prop}
	Let matrices $\ma \in \real^{m \times n}$ and $\mb \in \real^{m \times d}$ be given.   
	The perturbed MMLR problem in a Schatten $p$-norm
	\begin{eqnarray*}
		\min_{\mx\in\real^{n \times d}}{\|\ms(\ma\mx-\mb)\|_{(p)}} \quad \text{ for } \quad 1 \le p \le \infty
	\end{eqnarray*}
	in \eqref{eqn:perturbedMMLR} has the minimal Schatten $p$-norm solution $\mxtilde = (\ms\ma)\Dag\ms\mb$.  If $\rank(\ms\ma) = \rank(\ma)$ = n, then $\mxtilde$ is unique.
\end{prop}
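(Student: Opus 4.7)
The plan is to recognize the perturbed problem as a standard MMLR problem in disguise and then invoke Proposition \ref{thm:orig} verbatim. Specifically, if we set $\widetilde{\ma} \equiv \ms\ma \in \real^{c\times n}$ and $\widetilde{\mb} \equiv \ms\mb \in \real^{c\times d}$, then \eqref{eqn:perturbedMMLR} is literally the MMLR problem
\begin{eqnarray*}
\min_{\mx\in\real^{n\times d}}\|\widetilde{\ma}\mx - \widetilde{\mb}\|_{(p)}
\end{eqnarray*}
with data matrices $\widetilde{\ma}$ and $\widetilde{\mb}$.

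Next I would apply Proposition \ref{thm:orig} to $(\widetilde{\ma},\widetilde{\mb})$. Proposition \ref{thm:orig} does not require full column rank of the system matrix in order to produce a minimal Schatten $p$-norm solution; it only requires it for uniqueness. So the first assertion follows immediately: the minimal Schatten $p$-norm solution is $\widetilde{\mx} = \widetilde{\ma}\Dag\widetilde{\mb} = (\ms\ma)\Dag\ms\mb$, independent of $p\in[1,\infty]$.

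For the uniqueness claim, I would observe that the assumption $\rank(\ms\ma) = \rank(\ma) = n$ means $\widetilde{\ma} = \ms\ma$ has full column rank $n$, so $\mynull(\widetilde{\ma}) = \{\mathbf{0}\}$. The uniqueness statement in Proposition \ref{thm:orig} then applies directly to $(\widetilde{\ma}, \widetilde{\mb})$ and yields uniqueness of $\widetilde{\mx}$.

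There is essentially no obstacle here: the proposition is a direct corollary of Proposition \ref{thm:orig} under the substitution $(\ma,\mb)\mapsto(\ms\ma,\ms\mb)$. The only subtlety worth flagging explicitly in the write-up is that Proposition \ref{thm:orig} is stated for arbitrary $\ma$ (its minimal-norm conclusion does not require $\rank(\ma)=n$), which is exactly what permits applying it to $\ms\ma$ without any a priori rank assumption on $\ms$; the rank hypothesis enters only when one wants uniqueness of $\widetilde{\mx}$.
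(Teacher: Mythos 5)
Your proposal is correct and is exactly the argument the paper intends (the paper states this proposition without a separate proof precisely because it is Proposition \ref{thm:orig} applied to the pair $(\ms\ma,\ms\mb)$, with the rank hypothesis $\rank(\ms\ma)=n$ supplying $\mynull(\ms\ma)=\{\mathbf{0}\}$ and hence uniqueness). Your remark that the minimal-norm conclusion of Proposition \ref{thm:orig} needs no rank assumption, so it may be applied to $\ms\ma$ unconditionally, is the right subtlety to flag.
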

Following convention \cite{MMY15, RM16}, we define the prediction  and regression residual of the perturbed MMLR problem to be
	\begin{eqnarray*}
		\mbtilde = \ma\widetilde{\mx} \quad \text{ and} \quad
		\mrtilde = \mb - \ma \widetilde{\mx}.
	\end{eqnarray*}
\section{General multiplicative perturbations}
\label{sec:genmultbounds}

We present general multiplicative perturbation bounds for \eqref{eqn:perturbedMMLR} requiring no assumptions on $\ms$.  To enable geometric interpretation, we express the bounds in terms of orthogonal and oblique projectors onto $\range(\ma)$ or a subspace of $\range(\ma)$.  For a matrix $\ma$, 
\begin{eqnarray*}
	\mPa = \ma\ma\Dag
\end{eqnarray*} 
denotes the orthogonal projector onto $\range(\ma)$ along $\mynull(\ma\Tra)$ (\cite[Theorem III.1.3]{MR1061154} and \cite{ChatterH86, HoagW78, VelleW81}).  For the perturbed MMLR problem in \eqref{eqn:perturbedMMLR}, 
\begin{eqnarray*}
	\mP \equiv \ma (\ms \ma)\Dag \ms
\end{eqnarray*}
denotes the corresponding oblique projector onto a subspace of $\range(\ma)$.  If $\rank(\ms\ma) = \rank(\ma)$, then $\range(\mP) = \range(\mPa)$ although $\mynull(\mP) = \mynull(\ma\Tra\ms\Tra\ms)$ \cite[Theorem 3.1]{cerny}, and $\mynull(\ma\Tra\ms\Tra\ms)\ne \mynull(\mPa)$ in general \cite[Lemma 3.1]{chi2018randomized}.  Oblique projectors appear in \cite{Ste2011, cerny} for constrained least squares, \cite{MR3021435} for discrete inverse problems, and \cite{brust2020computationally, MR976336} for weighted least squares.  The oblique projector $\mP$ can be viewed as an extension of the oblique projector
\begin{eqnarray*}
	\mP_{D} = \ma(\ma\Tra\md\ma)\Inv \ma\Tra\md
\end{eqnarray*}
in \cite{MR976336} if $\md = \ms\Tra \ms$ is a diagonal matrix with positive elements on the diagonal and $(\ma\Tra\md\ma)\Inv$ exists.  If $\ms$ is a sketching matrix that samples without replacement and $c = m$, then $\ms\Tra\ms=\mi_{m}$ satisfies the requirements for $\md$ in \cite{MR976336}.  In this case, however, the sketched MMLR problem in \eqref{eqn:perturbedMMLR} becomes the exact MMLR problem in \eqref{eqn:exactMMLR}.  If $d=1$ and $p=2$ in \eqref{eqn:perturbedMMLR}, the oblique projector $\mP$ appears in \cite{RM16} if $\rank(\ms\ma) = \rank(\ma)$ and in \cite[Lemma 3.1]{chi2018randomized} for any sketching matrix $\ms$.  Oblique projectors also appear in other problems, such as the discrete empirical interpolation method (DEIM) oblique projector $\mathbb{D}=\mU_{r}(\ms\Tra\mU_{r})\Dag \ms\Tra$ in \cite[Section 3.1]{MR3826673}.

Since $\ma\Dag$ is a left inverse of $\ma$, the exact and perturbed solutions are $\mxhat = \ma\Dag \mPa \mb$ and $\mxtilde = \ma\Dag\mP\mb$, respectively \cite[Lemma 3.1]{chi2018randomized}.  Therefore, the absolute error between the solution and regression residual are 
\begin{eqnarray*}
	\mxtilde - \mxhat &=& [(\ms \ma)\Dag \ms - \ma\Dag]\mb \;=\; \ma\Dag(\mP - \mPa)\mb \quad \text{and} \\
	\mrtilde - \mrhat &=& \ma [\ma\Dag - (\ms \ma)\Dag \ms]\mb \;=\; (\mPa - \mP)\mb.
\end{eqnarray*}

Proposition \ref{cor:error-norankpreserved} bounds the absolute error of the perturbed solution and regression residual for the MMLR problem in a Schatten $p$-norm with $1 \le p \le \infty$ in terms of the above projection matrices.
\begin{prop}
	\label{cor:error-norankpreserved}
	For the perturbed MMLR problem in \eqref{eqn:perturbedMMLR}, the absolute error bounds on the solution and regression residual in a Schatten $p$-norm are
 	\begin{eqnarray*}
	\|\mxtilde - \mxhat\|_{(p)} &\le& \| \ma\Dag\|_{2} \, \|\mP - \mPa\|_{2} \, \|\mb\|_{(p)}  \quad \text{and}\\
	\|\mrtilde - \mrhat\|_{(p)} &\le& \| \mP - \mPa \|_{2} \, \| \mb\|_{(p)}.
	\end{eqnarray*}
	If $\ma\Tra\mb \ne {\bf 0}$, the relative error bound in a Schatten $p$-norm is
	\begin{eqnarray*}	\frac{\| \mxtilde - \mxhat\|_{(p)}}{\|\mxhat\|_{(p)}}
	 \le \kappa_{2}(\ma) \, \| \mP - \mPa \|_{2} \, \frac{\| \mb \|_{(p)}}{\|\ma\|_{2}\|\mxhat\|_{(p)}}. \end{eqnarray*}
\end{prop}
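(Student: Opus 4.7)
The plan is a direct norm estimation using the two absolute-error identities displayed in the paragraph immediately preceding the proposition, namely
\[
\mxtilde - \mxhat \;=\; \ma\Dag(\mP - \mPa)\mb, \qquad \mrtilde - \mrhat \;=\; (\mPa - \mP)\mb.
\]
These rewrite the errors as products of three matrices (in the first case) and two matrices (in the second), with $\mb$ sitting on the right in both. The only remaining ingredient is a Schatten $p$-norm estimate for such products, and that is exactly Lemma \ref{lem:pnorms}.

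For the first absolute bound I would apply Lemma \ref{lem:pnorms} with left factor $\ma\Dag$, middle factor $\mP - \mPa$, and right factor $\mb$, producing
\[
\|\mxtilde - \mxhat\|_{(p)} \;\le\; \|\ma\Dag\|_{2}\,\|\mP - \mPa\|_{2}\,\|\mb\|_{(p)}.
\]
For the second, I would view $(\mPa - \mP)\mb$ as $\mi_{m}(\mPa-\mP)\mb$ and apply the same lemma, which absorbs the identity factor (whose spectral norm is one) and yields the bound involving only $\|\mPa - \mP\|_{2}$ and $\|\mb\|_{(p)}$. The symmetry $\|\mPa - \mP\|_{2} = \|\mP - \mPa\|_{2}$ matches the statement.

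For the relative bound I would divide the first absolute bound by $\|\mxhat\|_{(p)}$ and then multiply and divide by $\|\ma\|_{2}$ so as to form $\kappa_{2}(\ma) = \|\ma\|_{2}\,\|\ma\Dag\|_{2}$. The only wrinkle is ensuring $\mxhat \ne {\bf 0}$ so that the quotient is well defined; the assumption $\ma\Tra\mb \ne {\bf 0}$ combined with $\rank(\ma)=n$ (so that $\ma\Dag = (\ma\Tra\ma)\Inv \ma\Tra$, with $(\ma\Tra\ma)\Inv$ nonsingular) gives $\mxhat = \ma\Dag\mb \ne {\bf 0}$. There is no real obstacle here: the proof is a two-line application of Lemma \ref{lem:pnorms} to identities that have already been derived, so the main work of the section has been front-loaded into the projector reformulation of the errors.
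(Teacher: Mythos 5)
Your proposal is correct and follows the same route as the paper: the paper's proof is a one-line appeal to Lemma \ref{lem:pnorms} applied to the projector identities $\mxtilde - \mxhat = \ma\Dag(\mP - \mPa)\mb$ and $\mrtilde - \mrhat = (\mPa - \mP)\mb$ derived just before the proposition. Your additional remarks on forming $\kappa_{2}(\ma)$ and on $\ma\Tra\mb \ne {\bf 0}$ guaranteeing $\mxhat \ne {\bf 0}$ simply make explicit what the paper leaves implicit.
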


\begin{proof}
	Lemma \ref{lem:pnorms} implies the bounds for the absolute error in a Schatten $p$-norm.
\end{proof}

Proposition \ref{cor:error-norankpreserved}, which extends \cite[Corollary 3.5]{chi2018randomized} to multiple right-hand sides and Schatten $p$-norms with $1 \le p \le \infty$, shows that the accuracy of the sketched solution and regression residual depends on the operator norm projector difference $\|\mP-\mPa\|_{2}$.
\section{Multiplicative Perturbations that Preserve Rank}
\label{sec:multi_perbs_preserve_rank}

We present multiplicative perturbation bounds for \eqref{eqn:perturbedMMLR} that hold if $\rank(\ms\ma) = \rank(\ma)$.  We begin by rewriting the difference between $\mPa$ and $\mP$ in terms of an orthonormal basis for the column space of $\ma$.  Since $\rank(\ms\ma) = n$, $(\ms\ma)\Dag = \mr\Inv (\ms\mq)\Dag$ so that 
\begin{eqnarray*}
	\mPa - \mP = \mq\mq\Tra - \mq(\ms\mq)\Dag\ms.
\end{eqnarray*}
Although the results in this section require the additional assumption that $\rank(\ms\ma)=\rank(\ma)$, they enable geometric interpretation beyond the difference between the projectors $\mPa$ and $\mP$.  

\begin{prop}
	\label{thm:error-rankpreserved}
	For the perturbed MMLR problem in \eqref{eqn:perturbedMMLR}, if $\rank(\ms\ma) = \rank(\ma)$, the absolute error bound in a Schatten $p$-norm for $1 \le p \le \infty$ is
	\begin{eqnarray*}
	\| \mxtilde - \mxhat \|_{(p)} &\le& \|\ma\Dag\|_{2} \,  \| (\ms\mq)\Dag \ms \mq_{\perp} \|_{2} \, \|\mrhat\|_{(p)}.
	\end{eqnarray*}
\end{prop}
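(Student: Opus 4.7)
The plan is to combine two facts that the excerpt has already made available: the representation $\mxtilde - \mxhat = \ma\Dag(\mP - \mPa)\mb$ from Section 3, and the identity $\mPa - \mP = \mq\mq\Tra - \mq(\ms\mq)\Dag \ms$ recorded immediately before the statement. The target is to manipulate the right-hand side into the factored form
\[
\mxtilde - \mxhat = \mr\Inv \bigl[(\ms\mq)\Dag \ms \mq_\perp\bigr] \mq_\perp\Tra \mb,
\]
after which the claim follows from a single application of Lemma \ref{lem:pnorms}.

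The key algebraic step is to insert the resolution of the identity $\mi_m = \mq\mq\Tra + \mq_\perp \mq_\perp\Tra$ between $\ms$ and whatever stands to its right in the expression for $\mPa - \mP$. Because $\rank(\ms\ma) = \rank(\ma) = n$, the tall matrix $\ms\mq$ has full column rank, hence $(\ms\mq)\Dag(\ms\mq) = \mi_n$. This collapses the $\mq\mq\Tra$ piece and leaves
\[
\mPa - \mP = -\mq(\ms\mq)\Dag \ms \mq_\perp \mq_\perp\Tra.
\]
Left-multiplying by $\ma\Dag$ and using $\ma\Dag \mq = \mr\Inv \mq\Tra \mq = \mr\Inv$ then yields the desired factorization.

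Applying Lemma \ref{lem:pnorms} with middle factor $(\ms\mq)\Dag \ms \mq_\perp$ gives
\[
\|\mxtilde - \mxhat\|_{(p)} \le \|\mr\Inv\|_2 \, \|(\ms\mq)\Dag \ms \mq_\perp\|_2 \, \|\mq_\perp\Tra \mb\|_{(p)}.
\]
Two cosmetic identifications finish the proof: $\|\mr\Inv\|_2 = \|\ma\Dag\|_2$ since $\mq$ has orthonormal columns, and $\|\mq_\perp\Tra \mb\|_{(p)} = \|\mq_\perp \mq_\perp\Tra \mb\|_{(p)} = \|\mrhat\|_{(p)}$ using Schatten $p$-norm invariance under multiplication by a matrix with orthonormal columns together with the expression $\mrhat = \mq_\perp \mq_\perp\Tra \mb$ from Proposition \ref{thm:orig}.

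I do not anticipate a difficult step; the only place to stumble is choosing where to insert the decomposition of the identity. Placing it so that the hypothesis $\rank(\ms\ma) = n$ can be invoked to turn $(\ms\mq)\Dag \ms \mq$ into $\mi_n$ is essential, and one must then recognize that the surviving $\mq_\perp \mq_\perp\Tra \mb$ factor is precisely the exact regression residual $\mrhat$, so that the Schatten $p$-norm on the right is expressed in the form asserted by the bound.
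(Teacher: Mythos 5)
Your proposal is correct and follows essentially the same route as the paper's proof: both hinge on inserting $\mi_m = \mq\mq\Tra + \mq_\perp\mq_\perp\Tra$, using the full column rank of $\ms\mq$ (equivalently $(\ms\mq)\Dag(\ms\mq)=\mi_n$) to collapse the $\mq\mq\Tra$ term, arriving at $\mxtilde - \mxhat = \mr\Inv(\ms\mq)\Dag\ms\mq_\perp\mq_\perp\Tra\mb$, and then applying Lemma \ref{lem:pnorms} together with $\|\mr\Inv\|_2 = \|\ma\Dag\|_2$ and $\mrhat = \mq_\perp\mq_\perp\Tra\mb$. The only cosmetic difference is that you enter through the projector representation $\ma\Dag(\mP-\mPa)\mb$ rather than directly from $(\ms\ma)\Dag\ms\mb - \ma\Dag\mb$ via $(\ms\ma)\Dag = \mr\Inv(\ms\mq)\Dag$, but the algebra is identical.
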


\begin{proof}
	Since $\rank(\ms\ma) = n$, we have $(\ms\ma)\Dag = \mr\Inv(\ms\mq)\Dag$.  Thus,
	\begin{eqnarray}
		\mxtilde - \mxhat &=& (\ms\ma)\Dag\ms\mb - \ma\Dag\mb \notag\\
						&=& \mr\Inv[(\ms\mq)\Dag\ms - \mq\Tra]\mb. \label{eqn:mxtilde_mxhat}
	\end{eqnarray}
	Multiplying $\mb$ on the left by the identity matrix $\mi = \mq\mq\Tra + \mq_{\perp}\mq_{\perp}\Tra$ and inserting it in \eqref{eqn:mxtilde_mxhat} gives
	\begin{eqnarray}
		\mxtilde - \mxhat &=& \mr\Inv[(\ms\mq)\Dag\ms - \mq\Tra](\mq\mq\Tra + \mq_{\perp}\mq_{\perp}\Tra)\mb \notag\\
		&=& \mr\Inv(\ms\mq)\Dag\ms\mq_{\perp}\mq_{\perp}\Tra\mb. \label{eqn:likedrineaspaper}
	\end{eqnarray}
	
	Lemma \ref{lem:pnorms} and unitary invariance of the operator norm imply{\tiny } the following upper bound on the Schatten $p$-norm of the absolute error difference between the sketched and exact MMLR solutions
	\begin{eqnarray*}
	\| \mxtilde - \mxhat \|_{(p)} 
	&\le& \| \mr\Inv \|_{2} \, \| (\ms\mq)\Dag \ms \mq_{\perp} \|_{2} \, \| \mq_{\perp}\mq_{\perp}\Tra\mb \|_{(p)}.
	\end{eqnarray*}
	Finally, applying the definition of the exact regression residual $\mrhat = \mq_{\perp}\mq_{\perp}\Tra\mb$ concludes the proof.
\end{proof}

Since $\|\ma\Dag\|_{2}$ and $\|\mrhat\|_{(p)}$ are fixed for any pair of $\ma$ and $\mb$, only $\| (\ms\mq)\Dag \ms\mq_{\perp} \|_{2}$ is affected by the choice of the sketching matrix $\ms$.  
We compare this to the approximate isometry term $\| (\ms\mq)\Tra \ms \mrhat \|_{2}$ from \cite[Equation 9]{Drineas2011}, where $(\ms\mq)\Tra \ms \mrhat$ is a vector.  Notice that we can arrive at the $\| (\ms\mq)\Tra \ms \mrhat \|_{2}$ term if we revert to \eqref{eqn:likedrineaspaper} in the above proof and assume that the columns of $\ms\mq$ are orthonormal so that $(\ms\mq)\Dag = (\ms\mq)\Tra$.  If we further restrict our analysis to the $d=1$ and $p=2$ case, we recover the same normed quantity as in \cite[Equation 9]{Drineas2011}.  Thus, we compare Proposition \ref{thm:error-rankpreserved} to \cite[Lemma 1]{Drineas2011}, where the absolute solution error for the $d=1$ and $p=2$ case is 
	\begin{eqnarray} \label{eqn:Drineas2011}
	\| \mxhat - \widetilde{\mx} \|_{2} &\le& \|\ma\Dag\|_{2} \sqrt{\epsilon} \| \mrhat\|_{2}
	\end{eqnarray}
for $\epsilon$ and $\ms$ satisfying \cite[Equations 8 and 9]{Drineas2011}:
	\begin{eqnarray}
	\|(\ms \mq)\Dag\|_{2} &\le& 2^{\frac{1}{4}} \quad \text{ and} \label{eqn:Drineas2011rank} \\
	\| (\ms\mq)\Dag \ms\mrhat \|_{2} &\le& \sqrt{\frac{\epsilon}{2}} \| \mrhat\|_{2}, \label{eqn:Drineas2011orthogonal}
	\end{eqnarray}
	
Proposition \ref{thm:error-rankpreserved} can be viewed as an extension of \cite[Lemma1]{Drineas2011} in the following ways.  First, Proposition\ref{thm:error-rankpreserved} extends the result in \cite[Lemma 1]{Drineas2011} for $d \ge 1$ and for Schatten $p$-norms with $1 \le p \le \infty$.  Second, \cite[Lemma 1]{Drineas2011} is a special case of Proposition \ref{thm:error-rankpreserved} when $d=1$, $p=2$, and $\sqrt{\epsilon} = \| (\ms\mq)\Dag \ms \mq_{\perp} \|_{2}$.  Third, in contrast with \cite[Lemma 1]{Drineas2011}, the bound in Proposition \ref{thm:error-rankpreserved} holds without requiring the assumptions \eqref{eqn:Drineas2011rank} or \eqref{eqn:Drineas2011orthogonal}.
	
\section{Angle between the original and perturbed subspaces}
\label{sec:distance}
 
We show that $\|(\ms\mq)\Dag\ms\mq_{\perp}\|_{2}$ is the tangent of a largest principal angle under two conditions: if $\ms$ has orthonormal columns, or if $\ms$ preserves rank.  Furthermore we show that if $\ms$ preserves rank, then $\|(\ms\mq)\Dag\ms\mq_{\perp}\|_{2}$ also represents the operator norm difference between the orthogonal projector $\mPa$ and the oblique projector $\mP$.  Therefore, if an orthogonal and an oblique projector have the same range, then their operator norm difference can be interpreted in terms of principal angles.  We begin with a decomposition of $\range(\ms\Tra)$ with respect to $\range(\mq)$ and $\range(\mq_{\perp})$.

\subsection{A decomposition of $\range(\ms\Tra)$}\label{sec:decomp}

The following geometric interpretations depend on a decomposition of $\ms$ into three subspaces.  
Let $\mathcal{Q} \equiv \range(\mq)$, $\mathcal{Q}^{\perp} \equiv \range(\mq_{\perp})$, and $\mathcal{S} \equiv \range(\ms\Tra)$.  Following the notation in \cite[Section 2]{ZKny13}, we can decompose $\mathcal{S}$ into the direct sum of the following subspaces
\begin{eqnarray*}
	\mathcal{S}_{1} \equiv \mathcal{S} \cap \mathcal{Q}, \quad
	\mathcal{S}_{0} \equiv \mathcal{S} \cap \mathcal{Q}^{\perp}, \quad \text{ and } \quad
	\mathcal{S}_{10} \equiv \mathcal{S} \cap (\mathcal{Q} \oplus \mathcal{Q}^{\perp})^{\perp}.
\end{eqnarray*}
We summarize and interpret these subspaces of $\mathcal{S}$ as follows.  The subspace $\mathcal{S}_{1}$ contains the directions in $\mathcal{S}$ that are also in $\mathcal{Q}$.  Specifically, $\mathcal{S}_{1} = \{ \vs \in \mathcal{S}: \vs\Tra\vq = \|\vs\|_{2}\|\vq\|_{2} \text{ for some } \vq \in \mathcal{Q} \}$, where $\| \cdot \|_{2}$ denotes the Euclidean vector norm.  

The subspace $\mathcal{S}_{0}$ contains the directions in $\mathcal{S}$ that are also in $\mathcal{Q}^{\perp}$.  Therefore, these are the directions in $\mathcal{S}$ that are orthogonal to directions in $\mathcal{Q}$.  Specifically, $\mathcal{S}_{0} = \{ \vs \in \mathcal{S}: \vs\Tra\vq = 0 \text{ for all } \vq \in \mathcal{Q} \}$.  

The subspace $\mathcal{S}_{10}$ contains the directions in $\mathcal{S}$ that are in neither $\mathcal{Q}$ nor $\mathcal{Q}^{\perp}$.  Therefore, these are the directions in $\mathcal{S}$ that are not orthogonal to $\mathcal{Q}$ but are also not in $\mathcal{Q}$.  Specifically, $\mathcal{S}_{10} = \{ \vs \in \mathcal{S}: 0 < |\vs\Tra\vq| < \|\vs\|_{2}\|\vq\|_{2} \text{ for all } \vq \in \mathcal{Q} \}$.

The subspace
\begin{eqnarray*}
	\mathcal{S}_{Q} & \equiv & \mathcal{S}_{1} \oplus \mathcal{S}_{10},
\end{eqnarray*}
then comprises the directions in $\mathcal{S}$ that are not orthogonal with directions in $\mathcal{Q}$. Specifically, $\mathcal{S}_{Q} = \{ \vs \in \mathcal{S}: 0 < | \vs\Tra\vq | \le \|\vs\|_{2}\|\vq\|_{2} \text{ for all } \vq \in \mathcal{Q} \}$.  

Section \ref{sec:example} presents an illustrative example of these subspaces in the context of Proposition \ref{prop:sqdagsqperp2}.  
In general, we have
\begin{eqnarray*}
	\dim(\mathcal{S}_{1}) \le \dim(\mathcal{Q}) = n
\end{eqnarray*}
and
\begin{eqnarray*}
	\dim(\mathcal{S}_{1}) \le \dim(\mathcal{S}_{Q}) \le \dim(\mathcal{S}) \le c.
\end{eqnarray*}
If $\rank(\ms\ma) = n$, then we additionally have
\begin{eqnarray*}
	\dim(\mathcal{S}_{1}) \le n \le \dim(\mathcal{S}_{Q}) \le \dim(\mathcal{S}) \le c.
\end{eqnarray*}

\subsection{Interpretation of $\|(\ms\mq)\Dag\ms\mq_{\perp}\|_{2}$ if $\ms$ has orthonormal rows}
\label{sec:angle1}

If $\ms$ has orthonormal rows, the quantity $\|(\ms\mq)\Dag\ms\mq_{\perp}\|_{2}$ has geometric interpretation even with no additional requirements on $\ms$ or $\rank(\ms\ma)$.  One example is sketching via random sampling without replacement where one row is selected in each sample.  The following relies on a key result on the angles between subspaces from \cite[Theorem 3.1]{ZKny13}.

\begin{prop}
	\label{prop:sqdagsqperp1}
	For the perturbed MMLR problem in \eqref{eqn:perturbedMMLR} with the subspaces defined in Section \ref{sec:decomp}, if $\ms$ has orthornomal rows, then
	\begin{eqnarray*}
		\| (\ms\mq)\Dag (\ms\mq_{\perp}) \|_{2} = 
		\tan \theta_{1}(\mathcal{S}, \mathcal{Q}),
	\end{eqnarray*}
	where 
	$\theta_{1}(\mathcal{S},\mathcal{Q})$ denotes a largest principal angle between $\mathcal{S}$ and $\mathcal{Q}$. 
	The absolute error bound in a Schatten $p$-norm is
	\begin{eqnarray*}
		\| \mxtilde - \mxhat \|_{(p)} &\le& \tan \theta_{1}(\mathcal{S}, \mathcal{Q}) \, \|\ma\Dag\|_{2} \, \|\mrhat\|_{(p)}.
	\end{eqnarray*}
\end{prop}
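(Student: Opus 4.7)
The plan is to reduce the claim to the standard SVD characterization of principal angles. I would first observe that $\ms\ms\Tra = \mi_c$ combined with orthogonality of $[\mq \;\; \mq_\perp]$ implies that $\ms[\mq\;\;\mq_\perp] = [\ms\mq \;\; \ms\mq_\perp]$ has orthonormal rows, which yields the key identity
\begin{equation*}
(\ms\mq)(\ms\mq)\Tra + (\ms\mq_\perp)(\ms\mq_\perp)\Tra = \mi_c.
\end{equation*}
The same hypothesis means $\ms\Tra$ has orthonormal columns, hence is an orthonormal basis for $\mathcal{S}$, while $\mq$ is an orthonormal basis for $\mathcal{Q}$. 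By the standard characterization of principal angles (of which \cite[Theorem 3.1]{ZKny13} is a variant), the singular values of $(\ms\Tra)\Tra\mq = \ms\mq$ are then the cosines $\cos\theta_i$, ordered so that $\sigma_{\min}(\ms\mq) = \cos\theta_1(\mathcal{S},\mathcal{Q})$.

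Assuming $\ms\mq$ has full column rank, so that $(\ms\mq)\Dag \ms\mq = \mi_n$, I would next compute the Gram matrix
\begin{align*}
\bigl[(\ms\mq)\Dag\ms\mq_\perp\bigr]\bigl[(\ms\mq)\Dag\ms\mq_\perp\bigr]\Tra
&= (\ms\mq)\Dag\bigl[\mi_c - \ms\mq(\ms\mq)\Tra\bigr]\bigl((\ms\mq)\Dag\bigr)\Tra \\
&= (\ms\mq)\Dag \bigl((\ms\mq)\Dag\bigr)\Tra - \mi_n.
\end{align*}
Taking operator norms and using $\|(\ms\mq)\Dag\|_2 = 1/\sigma_{\min}(\ms\mq) = \sec\theta_1$, I obtain
\begin{equation*}
\|(\ms\mq)\Dag\ms\mq_\perp\|_2^2 \;=\; \sec^2\theta_1 - 1 \;=\; \tan^2\theta_1(\mathcal{S},\mathcal{Q}),
\end{equation*}
and the claimed equality follows after taking square roots. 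The Schatten $p$-norm error bound is then immediate by substituting into Proposition~\ref{thm:error-rankpreserved}.

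The main obstacle is the implicit rank-preservation condition: the step $(\ms\mq)\Dag\ms\mq = \mi_n$ requires $\ms\mq$ to have full column rank $n$, equivalently $\theta_1(\mathcal{S},\mathcal{Q}) < \pi/2$, equivalently $\rank(\ms\ma) = n$. This is also what allows Proposition~\ref{thm:error-rankpreserved} to apply in the final step. If $\ms\mq$ is rank-deficient then $\theta_1 = \pi/2$, $\tan\theta_1 = +\infty$, and both sides of the stated equality are infinite under the usual convention; in a write-up I would either state this caveat explicitly or add $\rank(\ms\mq) = n$ to the hypotheses. A minor sanity check worth recording is that $\|\ms\mq\|_2 \le \|\ms\|_2\|\mq\|_2 = 1$, so $\sigma_{\min}(\ms\mq) \le 1$, confirming that $\|(\ms\mq)\Dag\|_2^2 - 1 \ge 0$ and the square-root step is well-defined.
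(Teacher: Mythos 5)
Your argument is correct and reaches the stated equality, but it is a genuinely different route from the paper's. The paper's proof is a one-line citation: it applies \cite[Theorem 3.1]{ZKny13} to the orthogonal matrix $\begin{pmatrix}\mq & \mq_{\perp}\end{pmatrix}$ and the orthonormal basis $\ms\Tra$ of $\mathcal{S}$, concluding that the positive singular values of $(\ms\mq)\Dag\ms\mq_{\perp}$ are tangents of principal angles between $\mathcal{S}$ and $\mathcal{Q}$; the error bound then follows from Proposition~\ref{thm:error-rankpreserved} exactly as in your last step. You instead re-derive the relevant special case of that theorem from scratch: the identity $(\ms\mq)(\ms\mq)\Tra+(\ms\mq_{\perp})(\ms\mq_{\perp})\Tra=\mi_{c}$ from the orthonormal rows of $\ms$, the cosine characterization $\sigma_{\min}(\ms\mq)=\cos\theta_{1}$, and the Gram-matrix computation converting cosines to tangents. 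This buys self-containedness, and in fact yields slightly more than the operator-norm statement, since the eigenvalues $1/\sigma_{i}^{2}-1$ of your Gram matrix show that \emph{all} singular values of $(\ms\mq)\Dag\ms\mq_{\perp}$ equal $\tan\theta_{i}$. What the citation-based proof buys is the rank-deficient case, which the paper explicitly wants to keep in scope (``the tangent \ldots may be $\infty$''), and which your computation cannot reach because $(\ms\mq)\Dag\ms\mq=\mi_{n}$ fails there. One correction to your closing remark: if $\rank(\ms\mq)<n$ the two sides are \emph{not} both infinite --- the Moore--Penrose inverse is always a finite matrix, so $\|(\ms\mq)\Dag\ms\mq_{\perp}\|_{2}$ stays finite while $\tan\theta_{1}=\infty$, and the equality genuinely breaks rather than holding by convention. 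Your suggestion to add $\rank(\ms\mq)=n$ as an explicit hypothesis is therefore the right fix for your version of the proof (and is also needed, as you note, to invoke Proposition~\ref{thm:error-rankpreserved} for the final bound).
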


This result follows from \cite[Theorem 3.1]{ZKny13} using the orthogonal matrix $\begin{pmatrix} \mq & \mq_{\perp} \end{pmatrix}$ and $\ms\Tra$ with $\ms$ having orthonormal rows.  Thus, the positive singular values of $(\ms\mq)\Dag\ms\mq_{\perp}$ are the tangents of the principal angles between $\mathcal{S}$ and $\mathcal{Q}$.  Therefore, the absolute error in a Schatten $p$-norm between the sketched and exact MMLR solutions depends on the tangent of a largest principal angle between $\mathcal{S}$ and $\mathcal{Q}$.   Notice that without additional assumptions on $\rank(\ms\ma)$, the tangent of a principal angle between $\mathcal{S}$ and $\mathcal{Q}$ may be $\infty$.

\subsection{Interpretation of $\|(\ms\mq)\Dag\ms\mq_{\perp}\|_{2}$ if $\rank(\ms\ma) = \rank(\ma)$}
\label{sec:angle2}

If the sketching matrix $\ms$ preserves rank so that $\rank(\ms\ma) = \rank(\ma)$, the quantity $\|(\ms\mq)\Dag\ms\mq_{\perp}\|_{2}$ has geometric interpretation without requiring additional assumptions on $\ms$.  This interpretation is based on \cite[Theorem 3.1 and Remark 3.1]{ZKny13}.

\begin{prop}
	\label{prop:sqdagsqperp2}
	For the perturbed MMLR problem in \eqref{eqn:perturbedMMLR} with the subspaces defined in Section \ref{sec:decomp}, if $\rank(\ms\ma) = \rank(\ma)$, then 
	the singular values of $(\ms\mq)\Dag\ms\mq_{\perp}$ represent the tangents of the principal angles between $\mathcal{Z}$, a subspace of $\mathcal{S}_{Q}$, and $\mathcal{Q}$.  Therefore,
	\begin{eqnarray*}
		\| (\ms\mq)\Dag (\ms\mq_{\perp}) \|_{2} = 
		\tan \theta_{1}(\mathcal{Z}, \mathcal{Q}),
	\end{eqnarray*}
	where $\theta_{1}(\mathcal{Z},\mathcal{Q})$ denotes a largest principal angle between $\mathcal{Z}$ and $\mathcal{Q}$.  Moreover, $\tan \theta_{1}(\mathcal{Z},\mathcal{Q})$ is strictly less than $\infty$ and the absolute error bound in a Schatten $p$-norm is
	\begin{eqnarray*}
		\| \mxtilde - \mxhat \|_{(p)} &\le& \tan \theta_{1}(\mathcal{Z}, \mathcal{Q}) \, \|\ma\Dag\|_{2} \, \|\mrhat\|_{(p)}.
	\end{eqnarray*}
\end{prop}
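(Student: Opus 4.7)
The plan is to reduce Proposition~\ref{prop:sqdagsqperp2} to Proposition~\ref{prop:sqdagsqperp1} (and the underlying [ZKny13, Theorem~3.1]) by factoring the sketching matrix through an orthonormal basis for $\mathcal{S}$. First I would write $\ms\Tra = \my_S \mathbf{L}$ via a thin QR or SVD, so that $\my_S \in \real^{m \times r}$ has orthonormal columns with $\range(\my_S) = \mathcal{S}$ and $\mathbf{L} \in \real^{r \times c}$ has full row rank, where $r = \rank(\ms)$. Because $\mathbf{L}\Tra$ has full column rank, the rank assumption $\rank(\ms\mq) = n$ is equivalent to $\rank(\my_S\Tra \mq) = n$; this rules out any principal angle of $\pi/2$ between $\mathcal{S}$ and $\mathcal{Q}$ and will guarantee finiteness of the tangents in the end.

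Next I would rewrite $(\ms\mq)\Dag \ms\mq_\perp$ in a form suitable for the principal-angle machinery. Under the rank condition,
\begin{equation*}
(\ms\mq)\Dag = (\mq\Tra \ms\Tra \ms\mq)\Inv \mq\Tra \ms\Tra,
\qquad
(\ms\mq)\Dag \ms\mq_\perp = (\mq\Tra \ms\Tra \ms\mq)\Inv \mq\Tra \ms\Tra \ms\mq_\perp,
\end{equation*}
which is an oblique analogue of the orthonormal-rows identity behind Proposition~\ref{prop:sqdagsqperp1}, with an inner product on $\mathcal{S}$ implicitly weighted by $\ms\Tra\ms$. Invoking [ZKny13, Theorem~3.1 and Remark~3.1] with this weighting, the singular values of $(\ms\mq)\Dag \ms\mq_\perp$ coincide with the tangents of the principal angles between $\mathcal{Q}$ and an $n$-dimensional subspace $\mathcal{Z}$ of $\mathcal{S}$, which I would identify as $\mathcal{Z} = \range(\ms\Tra \ms\mq)$, the image of $\mathcal{Q}$ under the weighting lifted back into $\mathcal{S}$. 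Using the orthogonal decomposition $\mathcal{S} = \mathcal{S}_0 \oplus \mathcal{S}_Q$, the rank preservation forces $\mathcal{Z}$ to have trivial intersection with $\mathcal{S}_0$, so $\mathcal{Z} \subseteq \mathcal{S}_Q$ as claimed; the same rank condition ensures $\theta_1(\mathcal{Z},\mathcal{Q}) < \pi/2$, so $\tan\theta_1(\mathcal{Z},\mathcal{Q}) < \infty$.

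The absolute error bound then follows by substituting $\|(\ms\mq)\Dag \ms\mq_\perp\|_2 = \tan\theta_1(\mathcal{Z},\mathcal{Q})$ into Proposition~\ref{thm:error-rankpreserved}. The main obstacle I anticipate is pinpointing the subspace $\mathcal{Z}$ in a way compatible with [ZKny13, Remark~3.1]: when $\ms$ lacks orthonormal rows, the weighting by $\ms\Tra\ms$ shifts the principal angles away from those between $\mathcal{S}$ and $\mathcal{Q}$, and the content of the remark is precisely that this shift is absorbed by replacing $\mathcal{S}$ with a specific $n$-dimensional subspace of $\mathcal{S}_Q$ rather than by rescaling the singular values. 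Verifying that $\range(\ms\Tra \ms\mq)$ is the correct choice of $\mathcal{Z}$, and that it indeed lies in $\mathcal{S}_Q$, is the crux of the argument.
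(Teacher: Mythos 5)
Your overall plan---identify $\mathcal{Z}$, appeal to \cite[Theorem 3.1 and Remark 3.1]{ZKny13}, and substitute into Proposition \ref{thm:error-rankpreserved}---points at the right target, and your identification of $\mathcal{Z}$ agrees with the paper's: there $\mathcal{Z}=\range(\mz\Tra)$ with $\mz=(\ms\mq)\Dag\ms$, and $\mz\Tra=\ms\Tra\ms\mq\,(\mq\Tra\ms\Tra\ms\mq)\Inv$ indeed has range $\range(\ms\Tra\ms\mq)$. But the central claim---that the singular values of $(\ms\mq)\Dag\ms\mq_{\perp}$ are the tangents of the principal angles between $\mathcal{Z}$ and $\mathcal{Q}$---is not established in your write-up. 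You propose to ``invoke \cite[Theorem 3.1]{ZKny13} with this weighting,'' but that theorem is stated for orthonormal bases and the ordinary Euclidean inner product; there is no weighted version to invoke off the shelf, and you yourself flag this step as the unresolved crux. As written, the proposal defers exactly the part of the argument that has to be proved. The opening factorization of $\ms\Tra$ through an orthonormal basis of $\mathcal{S}$ is also never used beyond the rank equivalence.

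The missing idea is a normalization (whitening) of $\mz=(\ms\mq)\Dag\ms$ that makes the unweighted theorem applicable while leaving the tangent matrix unchanged. Writing $\ms=\begin{pmatrix}\ms\mq & \ms\mq_{\perp}\end{pmatrix}\begin{pmatrix}\mq & \mq_{\perp}\end{pmatrix}\Tra$ and using that $(\ms\mq)\Dag$ is a left inverse of $\ms\mq$ gives $\mz=\begin{pmatrix}\mi_{n} & \mt\end{pmatrix}\begin{pmatrix}\mq & \mq_{\perp}\end{pmatrix}\Tra$ with $\mt=(\ms\mq)\Dag\ms\mq_{\perp}$, so $\mz\mz\Tra=\mi_{n}+\mt\mt\Tra$ is symmetric positive definite and $\mz_{0}=(\mz\mz\Tra)^{-\frac{1}{2}}\mz$ has orthonormal rows spanning $\mathcal{Z}$. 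The key cancellation is that $\mz_{0}\mq=(\mi_{n}+\mt\mt\Tra)^{-\frac{1}{2}}$ and $\mz_{0}\mq_{\perp}=(\mi_{n}+\mt\mt\Tra)^{-\frac{1}{2}}\mt$, so the tangent matrix $(\mz_{0}\mq)\Inv(\mz_{0}\mq_{\perp})$ for the pair $(\mathcal{Z},\mathcal{Q})$ is exactly $\mt$; nonsingularity of $\mz_{0}\mq$ also yields $\theta_{1}(\mathcal{Z},\mathcal{Q})<\pi/2$ and $\mathcal{Z}\subseteq\mathcal{S}_{Q}$. Without this (or an equivalent) computation, your assertion that the $\ms\Tra\ms$-weighting ``is absorbed by replacing $\mathcal{S}$ with a specific subspace'' remains unproved. (Your closing inference $\mathcal{Z}\cap\mathcal{S}_{0}=\{{\bf 0}\}\Rightarrow\mathcal{Z}\subseteq\mathcal{S}_{Q}$ is also stated too quickly, although the paper argues membership in $\mathcal{S}_{Q}$ in essentially the same way, via nonsingularity of $\mz_{0}\mq$.)
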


\begin{proof}
	The proof is adapted from \cite[Remark 3.1]{ZKny13}.  
	The proof strategy is to construct an orthonormal basis for a subspace of $\mathcal{S}_{Q}$ and then to apply \cite[Theorem 3.1]{ZKny13} with the orthonormal basis and $\mq$.
	
	We begin with a basis transformation of $\ms$ by constructing the orthogonal matrix
	\begin{eqnarray*}
		\mq_{B} \equiv \begin{pmatrix}
				\mq & \mq_{\perp}
		\end{pmatrix} \in \mathbb{R}^{m \times m}.
	\end{eqnarray*}
	Rewriting $\ms$ in terms of $\mq_{B}$ gives
	\begin{eqnarray*}
		\ms = \ms\mq_{B} \mq_{B}\Tra = \begin{pmatrix} \ms\mq &\ms\mq_{\perp} \end{pmatrix}\mq_{B}\Tra.
	\end{eqnarray*}
	Since $\rank(\ms\mq) = n$, $(\ms\mq)\Dag$ is a left inverse of $\ms\mq$ and so applying it to $\ms$ on the left gives
	\begin{eqnarray*}
		\mz \equiv (\ms\mq)\Dag \ms = \begin{pmatrix} \mi_{n} & (\ms\mq)\Dag\ms\mq_{\perp} \end{pmatrix}\mq_{B}\Tra \in \mathbb{R}^{n \times m}.
	\end{eqnarray*}
	Let $\mt \equiv (\ms\mq)\Dag\ms\mq_{\perp} \in \mathbb{R}^{n \times (m-n)}$.  We will show that the singular values of $\mt$ represent the tangents of the principal angles between $\mathcal{S}$ and $\mathcal{Q}$.
	
	Notice that the Gram matrix
	\begin{eqnarray*}
		\mz\mz\Tra = \mi_{n} + \mt\mt\Tra \in \mathbb{R}^{n \times n}
	\end{eqnarray*}
	is symmetric positive definite.  Therefore, its inverse has the unique symmetric positive definite square root $(\mz\mz\Tra)^{-\frac{1}{2}} = (\mi_{n} + \mt\mt\Tra)^{-\frac{1}{2}}$.  
	Now define
	\begin{eqnarray*}
		\mz_{0} \equiv (\mz\mz\Tra)^{-\frac{1}{2}}\mz \in \mathbb{R}^{n \times m}.
	\end{eqnarray*}
	Then $\mz_{0}$ has orthonormal rows and the columns of $\mz_{0}\Tra$ represent a basis for $\range(\mz\Tra)$.  Since $\rank(\ms\mq) = n$, $\range(\mz\Tra) = \range(\ms\Tra \ms\mq) \subseteq \range(\ms\Tra) = \mathcal{S}$.  
	
	Applying \cite[Theorem 3.1]{ZKny13} with $\mz_{0}\Tra$ and $\mq$ shows that the singular values of $(\ms\mq)\Dag\ms\mq_{\perp}$ are the tangents of the principal angles between $\mathcal{Z} \equiv \range(\mz\Tra)$ and $\mathcal{Q}$.  
	Since $(\mz_{0}\Tra)\Tra\mq = \mz_{0}\mq = (\mz\mz)^{-\frac{1}{2}} = (\mi_{n} + \mt\mt\Tra)^{-\frac{1}{2}}$ is nonsingular,  
	$\mathcal{Z} \subseteq \mathcal{S}_{Q}$ and the tangents of the principal angles between $\mathcal{Z}$ and $\mathcal{Q}$ are strictly less than $\infty$.	
\end{proof}

	Clearly, $\mathcal{Z} \subseteq \mathcal{S}_{Q}$.  One might ask the question: Is $\mathcal{Z} = \mathcal{S}_{Q}$?  
	Notice that $\rank(\ms\ma) = n$ and $\rank(\ms\Tra) \le c$ imply that
	\begin{eqnarray*}
		n \le \dim(\mathcal{S}_{Q}) \le c \quad \text{ and } \quad \dim(\mathcal{S}_{1}) \le c - n.
	\end{eqnarray*}
	Although $\mathcal{Z} \ne \mathcal{S}_{Q}$ in general, if $\dim(\mathcal{S}_{Q}) = n$, then $\dim(\mathcal{Z}) = n$ implies that $\mathcal{Z} = \mathcal{S}_{Q}$.  Meanwhile, if $\dim(\mathcal{S}_{Q}) > n$, then $n = \dim(\mathcal{Z}) < \dim(\mathcal{S}_{Q})$ so that $\mathcal{Z} \ne \mathcal{S}_{Q}$.   
	The example in Section \ref{sec:example} illustrates this concretely.

Propositions \ref{prop:sqdagsqperp1} and \ref{prop:sqdagsqperp2} show that if $\rank(\ms\ma) = \rank(\ma)$, $\| (\ms\mq)\Dag \ms\mq_{\perp} \|_{2}$ has geometric interpretation as the tangent of a largest principal angle between a subspace of $\mathcal{S}_{Q}$ and $\mathcal{Q}$.  Moreover, the tangents of the principal angles between these two subspaces are bounded.  If $\rank(\ms\ma) < \rank(\ma)$, then $\| (\ms\mq)\Dag \ms\mq_{\perp} \|_{2}$ still has geometric interpretation as the tangent of a largest principal angle between $\mathcal{S}$ and $\mathcal{Q}$ if $\ms$ has orthonormal rows.  
Proposition \ref{prop:sqdagsqperp2} implies that if $\rank(\ms\ma) = \rank(\ma)$, then the operator norm difference between $\mP$ and $\mPa$ has the following geometric interpretation.

\begin{prop}\label{prop:dist_Pa-P}
	For the perturbed MMLR problem in \eqref{eqn:perturbedMMLR} with the subspaces defined in Section \ref{sec:decomp}, if $\rank(\ms\ma) = \rank(\ma)$, 
	\begin{eqnarray*}
		\| \mP - \mPa \|_{2} &=& \tan \theta_{1}(\mathcal{Z}, \mathcal{Q}),
	\end{eqnarray*}
	where $\mathcal{Z}$ is a subspace of $\mathcal{S}_{Q}$ and $\theta_{1}(\mathcal{Z},\mathcal{Q})$ denotes a largest principal angle between $\mathcal{Z}$ and $\mathcal{Q}$.  Moreover, $\tan \theta_{1}(\mathcal{Z}, \mathcal{Q})$ is strictly less than $\infty$.
\end{prop}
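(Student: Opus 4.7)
The plan is to reduce $\|\mP - \mPa\|_2$ to the quantity $\|(\ms\mq)\Dag \ms \mq_\perp\|_2$, which has already been given a geometric meaning in Proposition \ref{prop:sqdagsqperp2}, so that the present statement becomes essentially a corollary of that interpretation together with the operator norm estimate produced from the calculation in Proposition \ref{thm:error-rankpreserved}.

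The first step will be to rewrite $\mP - \mPa$ in a clean form. Using $\ma = \mq\mr$ together with the identity $(\ms\ma)\Dag = \mr\Inv(\ms\mq)\Dag$ that holds under the rank-preservation hypothesis, I would obtain $\mP = \mq(\ms\mq)\Dag \ms$, while $\mPa = \mq\mq\Tra$, so that
\begin{eqnarray*}
\mP - \mPa \;=\; \mq\bigl[(\ms\mq)\Dag \ms - \mq\Tra\bigr].
\end{eqnarray*}
Inserting $\mi = \mq\mq\Tra + \mq_\perp \mq_\perp\Tra$ on the right, and exploiting $(\ms\mq)\Dag (\ms\mq) = \mi_n$ and $\mq\Tra \mq_\perp = {\bf 0}$, exactly the cancellations already used in the proof of Proposition \ref{thm:error-rankpreserved}, reduces this to
\begin{eqnarray*}
\mP - \mPa \;=\; \mq\,(\ms\mq)\Dag \ms\, \mq_\perp\, \mq_\perp\Tra.
\end{eqnarray*}

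The second step is to take the operator norm. Here $\mq$ has orthonormal columns and $\mq_\perp\Tra$ has orthonormal rows, and the supremum in the definition of the operator norm is attained on vectors with no component in $\range(\mq)$, so flanking by $\mq$ on the left and $\mq_\perp\Tra$ on the right preserves the two-norm. Concretely, writing a generic $\vv \in \real^m$ as $\vv = \mq\mq\Tra \vv + \mq_\perp \mq_\perp\Tra \vv$, the vector $\mq_\perp\Tra \vv$ ranges over all of $\real^{m-n}$ with norm at most $\|\vv\|_2$, and $\mq$ acts as an isometry on its image. This yields
\begin{eqnarray*}
\|\mP - \mPa\|_2 \;=\; \|(\ms\mq)\Dag \ms \mq_\perp\|_2.
\end{eqnarray*}

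The third and final step is to invoke Proposition \ref{prop:sqdagsqperp2}, which identifies the right-hand side with $\tan\theta_1(\mathcal{Z}, \mathcal{Q})$ and also guarantees finiteness of this tangent under the rank-preservation hypothesis. I do not anticipate any serious obstacle: the whole argument is a short assembly of (i) the algebraic simplification already performed in Section \ref{sec:multi_perbs_preserve_rank}, (ii) a routine unitary-invariance observation for the operator norm, and (iii) the geometric identification of $\|(\ms\mq)\Dag\ms\mq_\perp\|_2$ already established. The only point that deserves care is the norm-preservation argument in the second step, since $\mq_\perp\Tra$ is not square; but because $\mq_\perp\Tra$ has orthonormal rows and the image can attain any vector in $\real^{m-n}$, equality (rather than mere inequality) does hold.
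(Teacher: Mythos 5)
Your proposal is correct and follows essentially the same route as the paper's proof: insert $\mi_m = \mq\mq\Tra + \mq_{\perp}\mq_{\perp}\Tra$, cancel to get $\mP - \mPa = \mq(\ms\mq)\Dag\ms\mq_{\perp}\mq_{\perp}\Tra$, remove the orthonormal factors $\mq$ and $\mq_{\perp}\Tra$ by unitary invariance of the spectral norm, and invoke Proposition \ref{prop:sqdagsqperp2}. Your explicit justification that right-multiplication by $\mq_{\perp}\Tra$ preserves (rather than merely bounds) the operator norm is a detail the paper leaves implicit, and it is handled correctly.
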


\begin{proof}
	We decompose $\mi_{m}$ into the sum of orthogonal projectors and rewrite the operator norm difference between $\mPa$ and $\mP$ as the following 
	\begin{eqnarray*}
		\mPa - \mP = \mq\mq\Tra - \mq(\ms\mq)\Dag\ms(\mq\mq\Tra + \mq_{\perp}\mq_{\perp}\Tra).
	\end{eqnarray*}
	After we expand and cancel terms, the result follows from unitary invariance of spectral norms and Proposition \ref{prop:sqdagsqperp2}.
\end{proof}

This result is implied from the absolute error bound in Proposition \ref{prop:sqdagsqperp2}.  However, the direct statement of this result ties the interpretation of $\|(\ms\mq)\Dag\ms\mq_{\perp}\|_{2}$ as the tangent of a largest principal angle between a subspace of $\mathcal{S}_{Q}$ and $\mathcal{Q}$ to the operator norm difference between $\mP$ and $\mPa$.  In this way, we have additional geometric interpretation of the difference between an orthogonal and oblique projector with the same range if $\ms$ preserves rank.

\subsubsection{Illustrative example of the subspaces in Proposition \ref{prop:sqdagsqperp2}}\label{sec:example}

We provide an example illustrating the subspaces of Section \ref{sec:decomp} in the context of Proposition \ref{prop:sqdagsqperp2}.  Let 
\begin{eqnarray*}
	\mq = \begin{pmatrix}
		1 & 0 & 0 \\
		0 & 1 & 0 \\
		0 & 0 & 1 \\
		0 & 0 & 0 \\
		0 & 0 & 0 \\
		0 & 0 & 0 \end{pmatrix}, 
	 \;
	\mq_{\perp} = \begin{pmatrix}
		0 & 0 & 0 \\
		0 & 0 & 0 \\
		0 & 0 & 0 \\
		1 & 0 & 0 \\
		0 & 1 & 0 \\
		0 & 0 & 1 \end{pmatrix}, 
\; \text{ and } \;
	\ms\Tra = \begin{pmatrix}
		1 & 0 & 0 & 0 \\
		0 & 1 & 0 & 1 \\
		0 & 0 & 1 & 0 \\
		0 & 0 & 0 & 0 \\
		0 & 0 & 0 & 1 \\
		0 & 0 & 1 & 0 \end{pmatrix}. 
\end{eqnarray*}
Then $\mathcal{S}$ has the following subspaces
\begin{eqnarray*}
	\mathcal{S}_{1} = \range\begin{pmatrix}
		1 & 0  \\
		0 & 1  \\
		0 & 0  \\
		0 & 0  \\
		0 & 0  \\
		0 & 0  \end{pmatrix},
	\;
	\mathcal{S}_{10} = \range\begin{pmatrix}
		0 & 0 \\
		0 & 1 \\
		1 & 0 \\
		0 & 0 \\
		0 & 1 \\
		1 & 0 \end{pmatrix}, 
	\; \text{ and } \;
		\mathcal{S}_{Q} = \range\begin{pmatrix}
		1 & 0 & 0 & 0 \\
		0 & 1 & 0 & 1 \\
		0 & 0 & 1 & 0 \\
		0 & 0 & 0 & 0 \\
		0 & 0 & 0 & 1 \\
		0 & 0 & 1 & 0 \end{pmatrix}.
\end{eqnarray*}
This example illustrates how $\mathcal{S}_{1}$ contains directions in $\mathcal{S}$ that are in $\mathcal{Q}$, and $\mathcal{S}_{10}$ contains directions in $\mathcal{S}$ that cannot be represented solely by directions in $\mathcal{Q}$ or directions in $\mathcal{Q}^{\perp}$.  This is because vectors in $\mathcal{S}_{10}$ are obtained from a non-trivial linear combination of vectors in $\mathcal{Q}$ with vectors in $\mathcal{Q}^{\perp}$.  Thus, for any $\vv \in \mathcal{S}_{10}$ and any $\vq \in \mathcal{Q}$, we have $\vv\Tra\vq \ne 0$.  However, $\vv \notin \mathcal{Q}$ and $\vv \notin \mathcal{Q}^{\perp}$.

Notice that in this example, there are no non-zero directions in $\mathcal{S}$ that are also in $\mathcal{Q}^{\perp}$.  Since $\rank(\ms\ma) = n$ and $\rank(\ms\Tra) \le c$ require that $\dim(\mathcal{S}_{Q}) \ge n$ and $\dim(\mathcal{S}_{1}) \le c - n$, $\mathcal{S}_{0} = \{\textbf{0}\}$ is an artifact of this example.
 
Proceeding with the example, we have
\begin{eqnarray*}
	\ms\mq = \begin{pmatrix}
		1 & 0 & 0 \\
		0 & 1 & 0 \\
		0 & 0 & 1 \\
		0 & 1 & 0
	\end{pmatrix}
	\; \text{ and } \;
	\mz = (\ms\mq)\Dag\ms = \begin{pmatrix}
		1 & 0 & 0 & 0 & 0 & 0 \\
		0 & 1 & 0 & 0 & \frac{1}{2} & 0 \\
		0 & 0 & 1 & 0 & 0 & 1
	\end{pmatrix},
\end{eqnarray*}
where $\ms\mq$ has full column rank.  This gives us
\begin{eqnarray*}
	\mz\mz\Tra = \begin{pmatrix}
		1 & 0 & 0 \\
		0 & \frac{5}{4} & 0 \\
		0 & 0 & 2
	\end{pmatrix}
	\; \text{ and } \;
	\mz_{0} = (\mz\mz\Tra)^{-\frac{1}{2}}\mz = \begin{pmatrix}
	1 & 0 & 0 & 0 & 0 & 0 \\
	0 & \frac{2\sqrt{5}}{5} & 0 & 0 & \frac{\sqrt{5}}{5} & 0 \\
	0 & 0 & \frac{\sqrt{2}}{2} & 0 & 0 & \frac{\sqrt{2}}{2}
	\end{pmatrix}.
\end{eqnarray*}
Thus, $\mz_{0}\Tra$ has orthonormal columns and
\begin{eqnarray*}
	\mz_{0}\mq = \begin{pmatrix}
		1 & 0 & 0 \\
		0 & \frac{2\sqrt{5}}{5} & 0 \\
		0 & 0 & \frac{\sqrt{2}}{2} 
	\end{pmatrix}
\end{eqnarray*}
is nonsingular so that $\mathcal{Z} \subseteq \mathcal{S}_{Q}$ since all three directions in $\mathcal{Z}$ are not orthogonal with directions in $\mathcal{Q}$.  However, $\dim(\mathcal{Z}) = 3 = n$ while $\dim(\mathcal{S}_{Q}) = 4 = c$ so that $\mathcal{Z} \ne \mathcal{S}_{Q}$.

\noindent{\bf Funding:} The work was supported in part by NSF grants DGE-1633587, DMS-1760374, and DMS-1745654.

\bibliography{LSBV}

\end{document}